\title[Inflation of poorly conditioned zeros]{Inflation of poorly conditioned zeros of systems of analytic functions}
\author{
Michael Burr}
\address{Clemson University, 220 Parkway Drive, Clemson, SC 29634}
\email{burr2@clemson.edu}
\thanks{Burr was partially supported by NSF DMS \#1913119}
\author{ 
Anton Leykin 
}
\address{School of Mathematics, Georgia Tech, Atlanta, Georgia}
\email{leykin@math.gatech.edu}
\thanks{Leykin was partially supported by NSF DMS \#1719968 and \#2001267.}
\date{\today}
\renewcommand*\env@matrix[1][\arraystretch]{%
  \edef\arraystretch{#1}%
  \hskip -\arraycolsep
  \let\@ifnextchar\new@ifnextchar
  \array{*\c@MaxMatrixCols c}}
\newcommand{\DEF}[1]{{\color{blue}#1}}
\newcommand{\Sphere}{{\mathbb S}}
\newcommand{\Ball}[1]{{\mathbb B}_{#1}}
\newcommand{\mixedBall}[2]{\Ball{#2}^{(#1)}}
\newcommand{\Span}[1]{\langle #1 \rangle}
\newcommand{\ideal}[1]{(#1)}
\newcommand{\ii}{{\mathbf i}}
\newcommand{\RR}{{\mathbb R}}
\newcommand{\CC}{{\mathbb C}}
\renewcommand{\Re}{\operatorname{Re}}
\renewcommand{\Im}{\operatorname{Im}}
\newtheorem{theorem}{Theorem}[section]
\newtheorem{lemma}[theorem]{Lemma}
\newtheorem{corollary}[theorem]{Corollary}
\newtheorem{example}[theorem]{Example}
\begin{document}

\begin{abstract}
Given a system of analytic functions and an approximate zero, we introduce inflation to transform this system into one with a regular quadratic zero.  This leads to a method for isolating a cluster of zeros of the given system.
\end{abstract}

\maketitle

Let $g : \CC^n\to \CC^n$ be a square system of analytic functions. There exist heuristic methods to obtain approximations to some or all of the isolated zeros of $g$. In the context of our work, \DEF{isolation} of a zero is the problem of constructing a neighborhood of the given approximation that contains exactly one zero. When the approximation is close to a singular zero (with multiplicity $m$) of a nearby system, this problem becomes the problem of isolating a \DEF{cluster} of $m$ zeros.  

For regular zeros, methods derived from alpha-theory \cite{ComplexityComputation,alphaCertified} or interval arithmetic \cite{Moore:Interval} provide effective ways to isolate zeros.  These methods, however, do not apply to singular zeros.  We introduce an approach, called \DEF{inflation}, that transforms a system with an approximate zero into a system with a regular quadratic zero --- a new concept which we define.  We use this new system to compute a region containing a cluster of zeros of the original system.

Most prior approaches for approximating poorly conditioned (i.e., singular or nearly singular) zeros, such as deflation \cite{LVZ}, aim to \emph{reduce} the multiplicity of a zero so that the resulting zeros can be found using standard methods.  One new idea of our approach is, instead, to \emph{increase} the multiplicity of the zero or cluster of zeros.  By increasing the multiplicity of the zero, we regularize the zero, albeit not in a traditional sense --- regular quadratic zeros are still singular, but better behaved. Our approach combines ideas from linear and nonlinear algebra, real and complex analysis,  as well as convex optimization.


We refer the reader to \cite{HaoJiangLiZhi:2020simple-multiple-zeros} and the references therein for a discussion of other approaches to the zero clustering problem and the related problem of certifying a singular zero.
To the best of our knowledge, prior to this work, the clustering problem has only been considered in the univariate case and the multivariate case where the Jacobian has corank one.



\section{Exact singular zeros}\label{sec:exactzeros}

We call $x$ a \DEF{regular zero of order $d$} of a system $g$ if, 
first, the degree $i$ part of the Taylor expansion of $g$ at $x$ is identically zero for $i=0,\dots,d-1$, and, 
second, the degree $d$ part does not vanish on the unit sphere $\Sphere \subset \CC^n$.
We call a regular zero of order 2 a \DEF{regular quadratic zero}.

Let $y\in\CC^n$ be a zero of $g$ and $\kappa=\dim\ker Dg(y)$. 
If $\kappa=0$, then the zero is a \DEF{regular zero}, or, in our new terminology, regular of order one.  The most common case of a \DEF{multiple} (nonregular and singular) zero is called a \DEF{simple double zero} in~\cite{dedieu2001simple} and corresponds to a singularity of Type $\Sigma^{1,0}$ using the terminology of singularity theory \cite{SingularityBook}.  We consider the more general case where $y$ is a singularity of Type $\Sigma^\kappa$.  We define the \DEF{multiplicity} of an isolated zero $y$ to be the number of zeros near $y$ after a generic perturbation of $f$ \cite{SingularityBook}.

We introduce inflation as a way to transform a general system with a singularity of Type $\Sigma^\kappa$ at $y$ into a system with a regular quadratic zero at $y$.  Next we apply Rouch\'e's theorem to the quadratic part of the transformed system in order to calculate the multiplicity of the zero $y$ of the original system.  Throughout this section, we assume that the singular zero $y$ is explicitly known, and, hence, the kernel $\ker Dg(y)$ can be explicitly computed.  This restriction is lifted in \Cref{sec:approximate}.

\subsection{Inflation}\label{sec:inflation}

We first simplify the problem by applying an affine change of coordinates $A:\CC^n\to\CC^n$ with $A(0)=y$ so that the origin is a zero of $g\circ A$ and 
$$
\ker D(g\circ A)(0) = \Span{e_1,\dots,e_\kappa}.
$$
We then obtain the \DEF{inflated system} $g\circ A\circ S_\kappa$ where
\begin{equation*}
S_\kappa(x_i) = \left\{\begin{array}{cc}
     x_i, &  i\in [\kappa]; \\[.05cm]
     x_i^2,& \text{otherwise}.
\end{array}\right.  
\end{equation*}
Since $g$ is generic, the quadratic part of $g\circ A\circ S_\kappa$ does not vanish on the unit sphere, so $g\circ A\circ S_\kappa$ has a regular quadratic zero at the origin.  We remark that for $\kappa<n$, the resulting zero has \DEF{higher} multiplicity than the original zero.  More precisely, from the point of view of singularity theory, we pass from Type $\Sigma^\kappa$ to Type $\Sigma^n$.  This behavior prompted us to name this method ``inflation'' in contrast to ``deflation", see, e.g., \cite{LVZ}.

\subsection{Isolating zeros}

Let $\Sphere_{\varepsilon} \subset \CC^n$ be the sphere of radius $\varepsilon$ and $\Ball{\varepsilon}$ be the ball of radius $\varepsilon$ centered at $0$ in the Euclidean norm. 

\begin{lemma}\label{lemma:cluster}
Let $f$ be an arbitrary square system and $Q$ a square system of quadratic forms.  Suppose there exists $c>0$ and $\varepsilon>0$ such that 
\begin{enumerate}
    \item $\|(f-Q)(\Sphere_{\varepsilon})\|\leq c\varepsilon^2$, and \label{condition1} 
    \item $\|Q(\Sphere_1)\|\geq c$,\label{condition2}
\end{enumerate}
then 
$f$ has $2^n$ zeros in $\Ball{\varepsilon}$.  
\end{lemma}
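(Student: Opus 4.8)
This is a Rouch\'e-type comparison between $f$ and the quadratic ``model'' $Q$, so the plan has two movements: first count the zeros of $Q$, then transport the count to $f$. For the model: by degree-$2$ homogeneity, a zero $x\neq 0$ of $Q$ would force $Q(x/\|x\|)=0$, contradicting hypothesis~\eqref{condition2}; hence the only common zero of the quadratic forms $Q_1,\dots,Q_n$ in $\CC^n$ is the origin. Therefore $Q_1,\dots,Q_n$ is a regular sequence in the local ring at $0$, and the local B\'ezout theorem gives that $0$ is a zero of $Q$ of multiplicity $\prod_{i=1}^n\deg Q_i=2^n$. Since the zero scheme of $Q$ is a cone, all $2^n$ zeros (with multiplicity) sit at the origin, in particular inside $\Ball{\varepsilon}$.

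For the comparison, use homogeneity once more: for $x\in\Sphere_{\varepsilon}$ we have $\|Q(x)\|=\varepsilon^2\|Q(x/\varepsilon)\|\geq c\,\varepsilon^2$ by~\eqref{condition2}, while~\eqref{condition1} gives $\|(f-Q)(x)\|\leq c\,\varepsilon^2$, so $\|(f-Q)(x)\|\leq\|Q(x)\|$ on $\Sphere_{\varepsilon}=\partial\Ball{\varepsilon}$. One then invokes the multivariate Rouch\'e theorem --- equivalently, the homotopy $H_t=Q+t(f-Q)$, $t\in[0,1]$, stays zero-free on $\Sphere_{\varepsilon}$, and the number of zeros of a holomorphic square system in a region, counted with multiplicity, is a homotopy invariant as long as no zero crosses the boundary --- to conclude that $f$ has exactly $2^n$ zeros in $\Ball{\varepsilon}$, counted with multiplicity.

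The step requiring care is precisely that the estimates deliver only the \emph{non-strict} inequality $\|(f-Q)(x)\|\leq\|Q(x)\|$ on $\Sphere_{\varepsilon}$, while the textbook Rouch\'e conclusion wants it strict (equivalently, it wants $f$, and each $H_t$, to be zero-free on $\Sphere_{\varepsilon}$). I would dispose of this by interpreting $\Ball{\varepsilon}$ as the closed ball: for $t<1$ one has $\|H_t(x)\|\geq(1-t)\,c\,\varepsilon^2>0$ on $\Sphere_{\varepsilon}$, so $H_t$ has exactly $2^n$ zeros in the open ball for every $t\in[0,1)$; letting $t\to1$ and using compactness together with conservation and upper semicontinuity of local multiplicities shows these $2^n$ zeros accumulate only at zeros of $f=H_1$ in the closed ball, and a small-ball Rouch\'e argument at each zero of $f$ shows conversely that every zero of $f$ in $\overline{\Ball{\varepsilon}}$ is such a limit, so the total count is exactly $2^n$. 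The only external inputs are then the holomorphic multivariate argument principle / degree theory used above and the local B\'ezout theorem for $Q$.
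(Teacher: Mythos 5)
Your argument is essentially the paper's own proof: condition~(2) plus degree-$2$ homogeneity shows the only zero of $Q$ is the origin, of multiplicity $2^n$, and the estimate $\|f-Q\|\le c\varepsilon^2\le\|Q\|$ on $\Sphere_{\varepsilon}$ is then fed into the multivariate Rouch\'e theorem (the paper cites \cite[Theorem 2.12]{Berenstein:Rouche}) to transfer the count to $f$ in $\Ball{\varepsilon}$. Your additional material --- the local B\'ezout justification of the multiplicity $2^n$ and the homotopy/limiting discussion of the non-strict boundary inequality --- merely fills in details that the paper delegates to the citation, so the two proofs follow the same route.
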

\begin{proof}
  Condition \ref{condition2} implies that $Q$ has no zeros on a sphere.  Therefore, by homogeneity, its only zero is the origin, which has multiplicity $2^n$.
  Since $\|f-Q\|\leq c\varepsilon^2 \leq \|Q\|$ on $\Sphere_{\varepsilon}$,
  the conclusion follows from multivariate version Rouch\'e's theorem, see~\cite[Theorem 2.12]{Berenstein:Rouche}.  
\end{proof}

Since $g\circ A\circ S_\kappa$ has a regular quadratic zero at the origin, we define $Q=Q(g)$ to be the \DEF{quadratic part} of $g\circ A\circ S_\kappa$.  For $\varepsilon$ a small positive number, the conditions of \Cref{lemma:cluster} hold for the pair $(g\circ A\circ S_\kappa,Q)$.  

Since the inflation map is a cover with  $2^{n-\kappa}$ sheets with ramification locus $\{x_1x_2\dots x_\kappa=0\}$, we conclude that $y$ is a zero of multiplicity $2^\kappa$ of $g$.


We illustrate our technique when the zero of $g$ is explicitly known:  

\begin{example}\label{sec:example:multiple}
Let $\alpha\not=0$ and $g$ be the following system with a zero at $(y_1,y_2)$: 
$$
g(x) =  [(x_1-y_1)^2, (x_2-y_2)-\alpha(x_1-y_1)^2((x_1-y_1)+3y_1)]^T.  
$$
We observe that $\ker Dg(y)=\langle e_1\rangle$ and $\kappa=1$. Applying the affine transformation $A(x)=x+y$ 
and inflation, we get
$$
g\circ A\circ S_1(x)=[x_1^2,x_2^2-\alpha x_1^2(x_1+3y_1)]^T.
$$
We observe that the quadratic part of $g\circ A\circ S_1$ is $$Q(g\circ A\circ S_1)=[x_1^2,x_2^2-3\alpha y_1 x_1^2]^T,$$ which does not vanish on the unit sphere $\Sphere_1$.  Therefore, $g\circ A\circ S_1$ is a regular quadratic zero of multiplicity four at the origin.  Near the origin, $Q(g\circ A\circ S_1)$ dominates $g\circ A\circ S_1-Q(g\circ A\circ S_1)$.  

Using Rouch\'e's theorem~\cite[Theorem 2.12]{Berenstein:Rouche}, we conclude that $g\circ A\circ S_1$ has four zeros (counted with multiplicity) in the ball $\Ball{\varepsilon}$ for some $\varepsilon>0$. Since $A\circ S_1$ is a double cover, we conclude that $g$ has a zero of multiplicity \emph{two} at $y$.
\end{example}

We hint at how the computation from \Cref{sec:example:multiple} can be used to isolate a cluster of zeros.  Let $f(x)=[x_1^2-\alpha^2,x_2-\alpha x_1^3]^T$, which has zeros at $\pm(\alpha,\alpha^4)$. Suppose that we approximate this cluster of zeros with $y$. We observe that $f-f(y)$ differs from $g$ in \Cref{sec:example:multiple} only in the linear terms in the Taylor expansion around $y$. Moreover, the limit of this difference tends to $0$ as $\alpha\to 0$.  This behavior should be contrasted with that of the remaining terms contributing to $Q(g\circ A\circ S_1)$, which tend to a quadratic form that is nonvanishing on a sphere.  Therefore, for $y$ and $\alpha$ sufficiently small, $f-f(y)$ is dominated by $Q$ on $\Sphere_\varepsilon$.  Thus, Rouch\'e's theorem identifies a cluster of zeros of $f$ near $y$.

In the remainder of this paper, we expand upon this approach.

\section{Poorly conditioned zeros}\label{sec:approximate}

In typical applications, for system $f$, we do not expect that a multiple zero is given or even that $f$ has any singular zeros.  Instead, we assume that we are given an approximation $y$ to a cluster of zeros so that $f$ is nearly singular at $y$.  In addition, let $\kappa$ be the dimension of the numerical kernel of $D\!f(y)$, i.e., the number of small singular values.  We construct a nearby system $g$ which has a zero at $y$ and whose Jacobian has corank $\kappa$.  Then, by applying the methods from \Cref{sec:exactzeros}, we isolate a cluster of zeros of $f$.

\subsection{Transformations of the system} \label{sec:nearby-singular-system}
Suppose that $V\subset\CC^n$ is a linear subspace with $\dim V = \kappa$.  Consider
\begin{equation}\label{eq:singular}
g(x):=f(x)-f(y)-\sum_{i=1}^\kappa D\!f(y)\pi_V(x-y)
\end{equation}
where $\pi_V$ is the orthogonal projection onto $V$.  We observe that $y$ is a zero of $g$ and that $\ker Dg(y)\supset V$.  When $D\!f(y)$ is nonsingular, this containment is an equality.  In our computations, $V$ plays the role of an approximate kernel of $D\!f(y)$. One practical choice for $V$ is the span of singular vectors corresponding to the $\kappa$ smallest singular values of $D\!f(y)$.

\subsection{Isolating a cluster of zeros}

Suppose that $g$ is constructed from $f$ as in \Cref{sec:nearby-singular-system} and that $A$ is constructed from $g$ as in \Cref{sec:inflation}.  We then compare the zeros of $f\circ A\circ S_\kappa$ to those of $Q(g\circ A\circ S_1)$ using \Cref{lemma:cluster}.

Define $\mixedBall{\kappa}{\varepsilon} \subset \CC^n$ to be the ball of radius $\varepsilon$ in the mixed norm, i.e., 
$$\|x\|_{(\kappa)} = \sqrt{\sum_{i=1}^{\kappa}|x_i|^2+\sum_{i=\kappa+1}^{n}|x_i|}.$$
The inflation map $S_\kappa$ restricted to $\Ball{\varepsilon}$ is a cover of $\mixedBall{\kappa}{\varepsilon}$ with $2^{n-\kappa}$ sheets and ramification locus $\{x_1x_2\dots x_\kappa=0\}$.  This construction leads to the following:

\begin{corollary}\label{corollary:mixed-ball}
Let $f$ be a square system, $y\in\CC^n$, and 
$V\subset \CC^n$ be a linear subspace of dimension $\kappa$.  Suppose that $g$ is constructed as in \Cref{sec:nearby-singular-system}.  Let $A$ be an affine map such that $g\circ A(0)=0$ and $\ker D(g\circ A(0))=\Span{e_1,\dots,e_\kappa}$.  Let $Q=Q(g\circ A\circ S_\kappa)$ be the quadratic part of $g\circ A\circ S_\kappa$.  If $f\circ A\circ S_\kappa$ and $Q$ satisfy the assumptions of \Cref{lemma:cluster}, then $f$ has $2^\kappa$ zeros in $A(\mixedBall{\kappa}{\varepsilon})$.
\end{corollary}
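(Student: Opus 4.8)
The plan is to reduce \Cref{corollary:mixed-ball} to \Cref{lemma:cluster} applied to the pair $(f\circ A\circ S_\kappa,\,Q)$, and then to transport the resulting zero count first through the branched cover $S_\kappa$ and then through the affine isomorphism $A$. By hypothesis the pair $(f\circ A\circ S_\kappa,\,Q)$ satisfies the two conditions of \Cref{lemma:cluster} for some $c,\varepsilon>0$, so the lemma yields $2^n$ zeros of $f\circ A\circ S_\kappa$ in $\Ball{\varepsilon}$, counted with multiplicity. Moreover, since $\|Q\|\ge c\varepsilon^2$ on $\Sphere_\varepsilon$ by homogeneity and there $\|Q\|$ dominates $\|(f\circ A\circ S_\kappa)-Q\|$, none of these zeros lies on $\Sphere_\varepsilon$, so all of them are interior to $\Ball{\varepsilon}$. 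Note that the construction of $g$ enters the statement only through the data $A$ and $Q$; it plays no further role once the hypothesis of \Cref{lemma:cluster} is in place.

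The next ingredient is the elementary identity $\|S_\kappa(x)\|_{(\kappa)}=\|x\|_2$, valid for every $x\in\CC^n$ and immediate from the definitions of $S_\kappa$ and of the mixed norm. It gives $S_\kappa^{-1}(\mixedBall{\kappa}{\varepsilon})=\Ball{\varepsilon}$ with interiors and boundaries matching up, so the zeros of $f\circ A\circ S_\kappa$ in $\Ball{\varepsilon}$ are exactly the $S_\kappa$-preimages of the zeros of $f\circ A$ in $\mixedBall{\kappa}{\varepsilon}$, and the latter are finitely many and lie in the interior of $\mixedBall{\kappa}{\varepsilon}$.

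It remains to track multiplicities across $S_\kappa$. I would use that $S_\kappa$ is a finite holomorphic map that is the identity on the first $\kappa$ coordinates and the squaring map $z\mapsto z^2$ on each of the other $n-\kappa$; since over any point of $\CC$ the squaring map has total fiber degree $2$ (two simple preimages away from $0$, one preimage of local degree $2$ at $0$), over every point of the target one has $\sum_{p\in S_\kappa^{-1}(z)}\deg_p S_\kappa=2^{n-\kappa}$. Combining this with multiplicativity of local degree under composition of finite holomorphic germs — namely $\mathrm{mult}_p(f\circ A\circ S_\kappa)=\deg_p(S_\kappa)\cdot\mathrm{mult}_{z_0}(f\circ A)$ for each zero $z_0$ of $f\circ A$ and each $p$ over it — and summing over each fiber and then over all such $z_0$, one obtains $2^n=2^{n-\kappa}\sum_{z_0}\mathrm{mult}_{z_0}(f\circ A)$. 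Hence $f\circ A$ has $2^\kappa$ zeros in $\mixedBall{\kappa}{\varepsilon}$, counted with multiplicity, and since $A$ is an affine isomorphism it carries these bijectively and with the same multiplicities onto the zeros of $f$ in $A(\mixedBall{\kappa}{\varepsilon})$.

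I expect the main obstacle to be making this last multiplicity transfer rigorous along the ramification locus of $S_\kappa$: when a zero $z_0$ of $f\circ A$ has some of its last $n-\kappa$ coordinates equal to $0$, its fiber has strictly fewer than $2^{n-\kappa}$ points, and one must argue that the local degrees of $S_\kappa$ there compensate exactly (equivalently, invoke the flatness of $S_\kappa$) rather than use a naive $2^{n-\kappa}$-to-one count. A way to avoid quoting multiplicativity of local degrees is to first perturb $f$ to a nearby generic $\tilde f$ with only simple zeros, none on the branch locus, still satisfying the Rouch\'e estimate against $Q$; then every fiber consists of exactly $2^{n-\kappa}$ distinct simple points, the count downstairs is literally $2^n/2^{n-\kappa}=2^\kappa$, and stability of the Rouch\'e count under this perturbation returns the assertion for $f$ itself. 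The remaining point — that $f\circ A\circ S_\kappa$, equivalently $f\circ A$, has no zeros on $\Sphere_\varepsilon$, respectively on $\partial\mixedBall{\kappa}{\varepsilon}$ — is inherited from the hypotheses of \Cref{lemma:cluster}, as noted above.
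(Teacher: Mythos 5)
Your proof is correct and takes essentially the same route as the paper: apply \Cref{lemma:cluster} to the pair $(f\circ A\circ S_\kappa,\,Q)$ to obtain $2^n$ zeros in $\Ball{\varepsilon}$, use the norm identity $\|S_\kappa(x)\|_{(\kappa)}=\|x\|_2$ to view $S_\kappa|_{\Ball{\varepsilon}}$ as a $2^{n-\kappa}$-sheeted branched cover of $\mixedBall{\kappa}{\varepsilon}$, divide out the sheets to get $2^\kappa$ zeros of $f\circ A$, and transport them by the affine isomorphism $A$. Your explicit accounting of local degrees along the ramification locus (or the alternative perturbation argument) simply makes rigorous the sheet-counting step that the paper leaves implicit.
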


\subsection{Example} 
Let $\alpha=0.01$ in the system $f(x)=[x_1^2-\alpha^2,x_1+x_2-\alpha x_1^3]^T$.  Let $y:=(0.001,-0.001)$ be a rough approximation to the center of the cluster of zeros of $f$.  Our computation allows us to isolate the cluster of two zeros as depicted in \Cref{fig:example}.

\begin{figure}[ht!]
\floatbox[{\capbeside\thisfloatsetup{capbesideposition={right,center},capbesidewidth=6.5cm}}]{figure}[\FBwidth]
{\caption{
\newline
The zeros of $f_{0.01}=0$ are the red points.  An approximation $y = (0.001, -0.001)$ to the center of the cluster is the blue point.  The shaded region is $A(\mixedBall{\kappa}{0.17})$, which isolates the zeros of $f_{0.01}$.\label{fig:example}}}
{\includegraphics[width=7cm]{./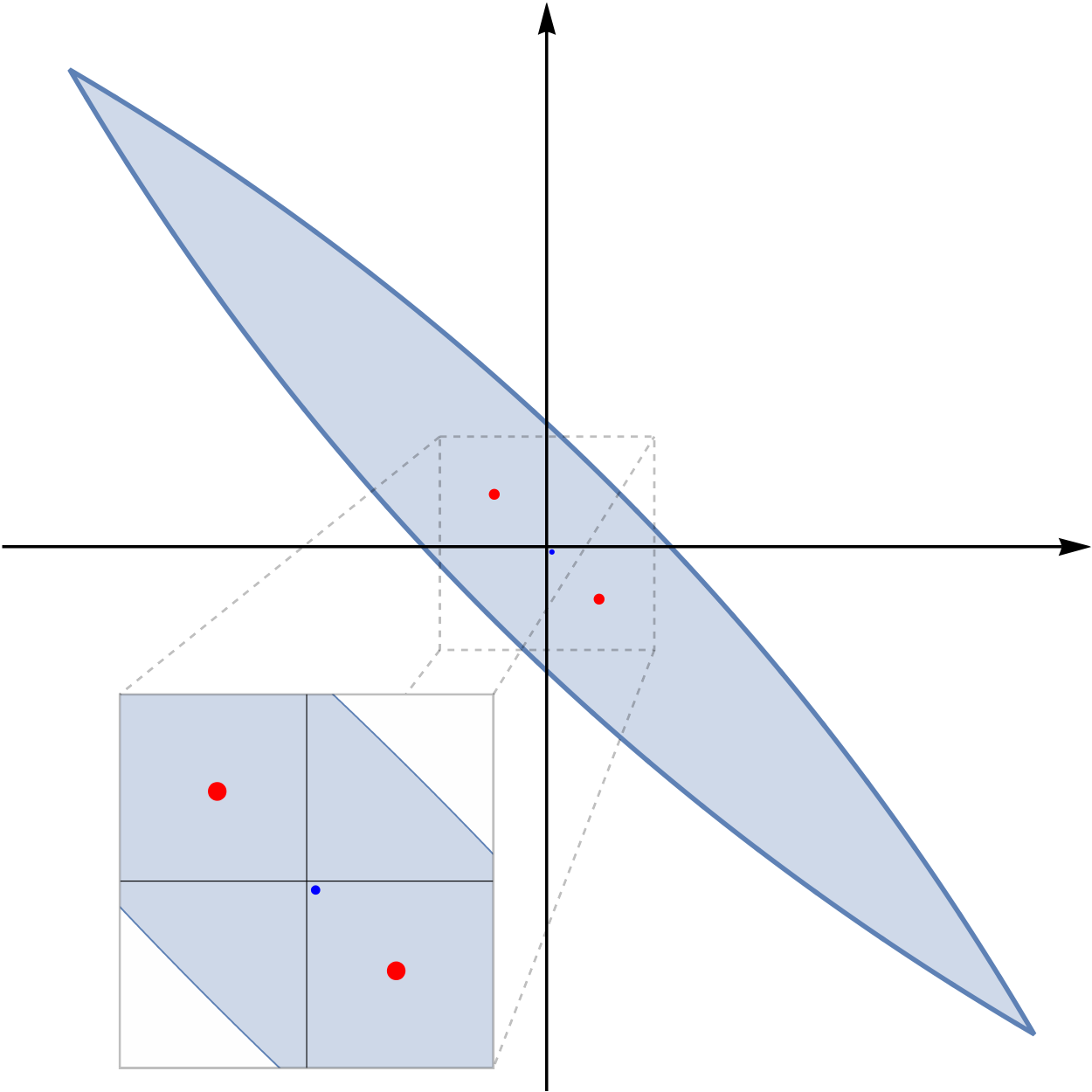}}
\end{figure}

The singular values at $y$ are $1.414$ and $0.001414$.  Moreover, the singular vector corresponding to the smaller singular value is $v:=[-0.707,0.707]^T$.  We then construct $g$ as
\begin{align*}
g(x)&=f(x)-f(y)-D\!f(y)\pi_v(x,y)\\
&\approx[x_1^2-10^{-3}x_1-10^{-3}x_2+10^{-6},-0.01x_1^3+x_1+x_2-2\cdot10^{-9}]^T.
\end{align*}
Using the affine transformation $A$, whose constant part is $y$ and whose linear part is both orthogonal and sends $v$ to $e_1$, we have (after suppressing terms which are nonzero only due to round-off error),
$$
g\circ A\approx\begin{bmatrix}0.5x_1^2+x_1x_2+0.5x_2^2-0.001414x_2\\[.3cm]
\begin{array}{c}
0.003536x_2^3+0.01061x_1^2x_2
+0.01061x_1x_2^2+0.003536x_1^3\\
-1.5\cdot 10^{-5}x_1^2
-3\cdot 10^{-5}x_1x_2
-1.5\cdot 10^{-5}x_2^2
-1.414x_2
\end{array}\end{bmatrix}.
$$
We observe that in this system, there are no constant terms and only $x_2$ appears as a linear term.  We apply the inflation map $S_1$, which replaces $x_2$ by its square, and extract the quadratic part 
$$
Q=Q(g\circ A\circ S_1)\approx [0.5x_1^2-0.001414x_2^2,-1.5\cdot 10^{-5}x_1^2-1.414x_2^2]^T
$$
Using a sum of squares computation, see \Cref{sec:SOS} for details, we find that a lower bound on $q=\|Q\|^2$ on the unit sphere is $0.2221$.  Hence, in \Cref{lemma:cluster}, $c\approx 0.4713$.

We now compute $\varepsilon$ for Condition~\ref{condition1} of \Cref{lemma:cluster} as follows (omitting terms with coefficients less than $10^{-16}$):
$$
f\circ A\circ S_1-Q\approx\begin{bmatrix}0.5x_2^4+x_1x_2^2-0.001414x_1-10^{-4}\\[.3cm]
\begin{array}{c}
0.003536x_2^6+0.01061x_1x_2^4+0.01061x_1^2x_2^2-1.5\cdot 10^{-5}x_2^4\\
+0.003536x_1^3-3\cdot 10^{-5}x_1x_2^2+1.414\cdot 10^{-6}x_1-10^{-11}\\
\end{array}\end{bmatrix}.
$$
Via crude upper bounds, namely, by using the triangle inequality and bounding the magnitude of all variables from above by $\varepsilon$, we find that Condition~\ref{condition1} of \Cref{lemma:cluster} holds for $\varepsilon\in[0.017,0.39]$.  Therefore, by \Cref{corollary:mixed-ball}, $f$ has $2$ zeros in $A(\mixedBall{1}{\varepsilon})$.

\section{Lower bounds on quadratic forms}\label{sec:lower-bound}

A key step in the application of our method is finding a $c>0$ so that Condition \ref{condition2} of \Cref{lemma:cluster} holds.  Computing $c$ is a polynomial optimization problem.  

Let $Q=(Q_1,\dots,Q_n)$ be a vector of quadratic forms.  Condition \ref{condition2} of \Cref{lemma:cluster} requires a positive lower bound on $\|Q\|$ on the unit sphere.  We use a sum-of-squares certificate of positivity approach to compute this bound, but also discuss alternatives. A full study of theoretical efficiency as well as practicality of these approaches is left to future work.  

\subsection{Sum-of-squares relaxation}\label{sec:SOS}

We identify $\CC^n$ with the real space $\RR^{2n}$ of twice as many dimensions.  For each variable $x_i$, we let $a_i=\Re(x_i)$ and $b_i=\Im(x_i)$.  We are interested in a positive lower bound on the following quartic over the real unit sphere $\Sphere_1 \subset \RR^{2n} $:
\[
q(a,b) = \|Q(a+\ii b)\|^2 = \sum_{j=1}^n\Re(Q_j(a+\ii b))^2 + \Im(Q_j(a+\ii b))^2.
\]
In addition, we define 
$$
s(a,b)=\|a+\ii b\|^2=\sum a_i^2+\sum b_i^2,
$$
where the real zero set of $s(a,b)-1$ is the unit sphere.  Our goal is to compute 
$$q^* = \min_{\Sphere_1} q = \min \{q(a,b)\mid s(a,b)=1\}.$$

Let $\Sigma_{SOS}$ be the convex cone of polynomials that are sums of squares. The \DEF{sums-of-squares moment relaxation of level $r$} computes a lower bound on $q^\ast$, see, e.g., \cite[\S 3.2.4]{blekherman2012semidefinite} and \cite{KMoment:Schmudgen}. In our case, this bound is  
$$
c_r = \max \{ c \mid q-c = \sigma + h ,\ \sigma\in\Sigma_{SOS},\ h\in\ideal{s-1},\ \max(\deg \sigma,\deg h) \leq 2r \}.
$$

Since $\lim_{r\to\infty} c_r=q^*$, for $r$ large enough, $c_r>0$ when $q^*>0$.
We remark that, should a relaxation of level $r$ succeed with $c_r>0$, a rational certificate of positivity for $q$ can be produced as in~\cite{peyrl2008computing}.

\subsection{Alternative approaches}

Isolating all critical points of $q$ on the sphere $\Sphere_1$ is one alternate approach. This can be accomplished provided the zeros of a Lagrange multiplier system are regular and their number matches the generic number of zeros of this system. In full generality, the zeros may be singular, ``at infinity'', or poorly conditioned, which would hamper the practicality of this approach.   

Interval arithmetic, see \cite{Moore:Interval}, may be used to estimate a lower bound on $q$ over the sphere $\Sphere_1$.  This method estimates $q$ on each region of a partition of the sphere.  As the partition becomes finer, this estimate converges to $q^\ast$.  Subdivision-based schemes involving interval arithmetic sometimes have exponential complexities, see \cite{Burr:DiameterDistnace}, but may be efficient in practice \cite{Cucker:PV:average}.

\section{Summary and remarks}

Given a heuristic to determine an approximate rank and an approximate kernel of linear map, we summarize our approach below.

\medskip

\noindent{\bf Input:} A polynomial system $f:\CC^n\to \CC^n$ and a point $y\in \CC^n$ that approximates a cluster of zeros.  \\
\noindent{\bf Output:} An integer $0\leq \kappa\leq n$, a real number $\varepsilon>0$, and the image of a mixed-norm ball $\mixedBall{\kappa}{\varepsilon}$ that contains $2^\kappa$ zeros of $f$.
\begin{enumerate}
\item Compute $\kappa$, the dimension of an approximate kernel of $D\!f(y)$.  
\item Use \Cref{eq:singular} to construct the system $g$ which has a zero at $y$ of Type $\Sigma^\kappa$.
\item Construct an affine transformation $A$ so that $g\circ A$ has a zero at $0$ and the kernel of $g\circ A$ at $0$ is $\langle e_1,\dots,e_\kappa\rangle$.
\item Use the inflation operator $S_\kappa$ so that $g\circ A\circ S_\kappa$ has no constant or linear terms.
\item Find a lower bound $c>0$ for the norm of the quadratic part $Q$ of the system $g\circ A\circ S_1$ on the unit sphere using an approach from \Cref{sec:lower-bound}. 
\item Find a small $\varepsilon>0$ so that $\|f-Q\| < c\varepsilon^2$ on the sphere $\Sphere_{\varepsilon}$.
\item Return $\kappa$, $A$, and $\varepsilon$. 
\end{enumerate}

\subsection{Application to systems with exact singular zeros} \label{remark:exact-zeros}
Suppose $f$ has an exact zero $x$ of singularity Type $\Sigma^\kappa$ inflatable to a regular quadratic zero. 
As long as there is an efficient way to refine the approximation $y$ of $x$, our approach  is guaranteed to succeed.
Indeed, one can use SVD to recover $\kappa$ and and an approximation of $\ker D\!f(x)$ robustly, since exactly $\kappa$ singular values approach $0$ as $y\to x$.

This procedure does not certify the fact that there is a singular zero in the constructed region, but it isolates and certifies the existence of a cluster of multiplicity equal to that of $x$.

\subsection{Approximating the cluster}

While we consider computing an approximation $y$ to a cluster as a separate problem, one algorithm we have in mind is that of deflation, as described in~\cite{LVZ}. This approach provides a way to refine an approximation to an exact singular zero as required in~\Cref{remark:exact-zeros}.   
In the case of a cluster of several distinct zeros, deflation still delivers an augmented (often overdetermined) system of equations that can be used for our purpose. The fixed point of the operator in Newton's method is a good heuristic approximation of the cluster even if this fixed point is not an actual zero of the augmented system. 

\subsection{Geometry of a cluster}
Inflation changes the geometry around a cluster. As a result, the mixed-norm ball of \Cref{corollary:mixed-ball} approximates the geometry of a cluster better than a Euclidean ball, as in the cluster isolation statement in ~\cite[Theorem~4]{dedieu2001simple}.  It may be possible to pursue a strategy similar to that of~\cite{dedieu2001simple} in the case of a regular quadratic zero. We remark that even if this strategy succeeds, approaches inspired by Smale's $\alpha$-theory tend to provide very conservative bounds.  

\subsection{Reduction to regular zeros of higher order}
Our isolation technique can be readily adopted to the case of a regular zero of arbitrary order. In fact, our method has a straightforward application to simple multiple zeroes of arbitrary multiplicity as studied, for instance, in~\cite{HaoJiangLiZhi:2020simple-multiple-zeros}. In this case, $\kappa=1$ and one may generalize inflation to produce a system with a regular zero of order $\mu$ when the multiplicity $\mu$ is known. This construction would also result in a mixed norm ball that may give a tighter isolation region than the Euclidean ball produced by the method of~\cite{HaoJiangLiZhi:2020simple-multiple-zeros}. 

A system with a singular zero with arbitrary multiplicity structure could be more complicated, however.  
We suspect that a more intricate inflation procedure may be needed in the general case. 

\subsection{Applications to square polynomial systems}

Given a system of $n$ polynomial equations in $n$ unknowns, it is common to have both a heuristic method to approximate all zeros and a way to count them with multiplicity. When, in addition, all zeros are either regular or inflatable to regular quadratic zeros, our approach certifies the completeness of the approximate computation by producing a computer derived, but explicitly checkable proof.

\bibliographystyle{plain}
\bibliography{bibliography.bib}

\end{document}